\documentclass[12pt,a4paper,oneside]{amsart}\sloppy
\usepackage{amsfonts, amsmath, amssymb, amsthm}
\usepackage[colorlinks=true,citecolor=blue]{hyperref}
\usepackage[margin=1.4in]{geometry}
\usepackage{txfonts}
\usepackage[T1]{fontenc}
\usepackage{bbm}
\usepackage{mathtools}
\newcommand{\defeq}{\vcentcolon=}
\usepackage{graphicx}
\usepackage{enumerate}
\usepackage{verbatim}
\usepackage{tikz}
\usepackage{algorithmic}

\usetikzlibrary{decorations,decorations.pathreplacing}

\begingroup
    \makeatletter
    \@for\theoremstyle:=definition,remark,plain\do{%
        \expandafter\g@addto@macro\csname th@\theoremstyle\endcsname{%
            \addtolength\thm@preskip\parskip
            }%
        }
\endgroup

\newtheorem{theorem}{Theorem}%[section]
\newtheorem*{theorem*}{Theorem}
\newtheorem{lemma}[theorem]{Lemma}

\newtheorem*{claim*}{Claim}

\theoremstyle{definition}

\newtheorem*{remark*}{Remark}

\newtheorem{example}[theorem]{Example}

%%%%%%%%%%%%%%%%%%%%%%%%%%%%%%%%%%%%%%%%%%

\begin{document} 

\title{On a transversal theorem of Montejano and Karasev}  

\author{Andreas F. Holmsen}

\date{\today}

% \address{Andreas F. Holmsen, %\hfill \hfill \linebreak 
% Department of Mathematical Sciences, % \hfill \hfill \linebreak
% KAIST, 
% Daejeon, South Korea.  \hfill \hfill }
% \email{andreash@kaist.edu}

\begin{abstract} 
We give a new proof a theorem of Montejano and Karasev \cite{roman} regarding $k$-dimensional transversals to small families of convex sets. While the result in \cite{roman} uses technical algebraic and topological tools, our proof is a simple application of the Borsuk--Ulam theorem. Additionally, in certain cases we obtain stronger results than those in \cite{roman}. 
\end{abstract}

%\dedicatory{My dedication}

\maketitle 

\section{Introduction}

Let $F_1, \dots, F_n$ be finite families of convex sets in $\mathbb{R}^d$.  We say that the families $F_1, \dots, F_n$ satisfy {\em the colorful intersection property} if $C_1\cap \cdots \cap C_n\neq \emptyset$ for any choice of $C_1\in F_1, \dots, C_n\in F_n$.

\smallskip

Given $n=d+1$ finite families of convex sets in $\mathbb{R}^d$ which satisfy the colorful intersection property, the colorful Helly theorem \cite{colhel} asserts that there is a point that intersects every member of one of the families $F_i$.
(Even more can be said; If $n=d+k$ there is a point that intersects every member of $k$ of the families.)

\smallskip

On the other hand, what can be said  when $n\leq d$~? In this case, the conclusion of the colorful Helly theorem clearly fails; Simply take each $F_i$ to be a family of hyperplanes in general position. Nevertheless, the colorful intersection property imposes non-trivial geometric constraints on the families. For instance, if $n=d$ it is easy to see  
%follows easily from the colorful Helly theorem 
that for any direction there exists a family $F_i$ whose members can be intersected by a {\em line} in that given direction. (Just project to a hyperplane orthogonal to the given direction and apply the colorful Helly theorem.)

\smallskip

These observations have led to a number of interesting results and conjectures concerning families satisfying the colorful intersection property \cite{edgardo, luis, roman}. Here we focus on the remarkable results by Montejano and Karasev \cite{roman}, which state that under 
certain conditions on $n$, $|F_i|$ and $d$, the colorful intersection property implies that that one of the families $F_i$ admits a {\em $k$-transversal}, that is, a $k$-dimensional affine flat that intersects every memebr of $F_i$. In its simplest non-trivial form, their theorem can be formulated as follows (See also \cite[Theorem 3.2]{luis}):

\smallskip

{\em Consider 3 red convex sets and 3 blue convex sets in $\mathbb{R}^3$, and suppose every red set intersects every blue set. Then all the red sets can be intersected by a line or all the blue sets can be intersected by a line.}

\smallskip

The general theorems of Montejano and Karasev rely on a number of technical tools (multiplication formulas for Schubert cocycles, Steifel--Whitney characteristic classes, Lusternik--Schnirelmann category of the Grassmannian), and
simpler proofs are of considerable interest. In this respect, Strausz \cite{dino} recently gave an elementary proof of the statement above based on the non-planarity of the complete bipartite graph $K_{3,3}$. 

\smallskip
The purpose of this note is to give an elementary proof of the Montejano--Karasev theorem.  (Our proof is based on the Borsuk--Ulam theorem, but it seems unlikely that this type of results can be proven without using any topology.)
In particular, we establish the following.

\begin{theorem} \label{main}
Let $k_1, \dots, k_n$ be non-negative integers and set $m = k_1 + \cdots + k_n$. For each $1\leq i \leq n$,  let $F_i$ be a family of $k_i+2$ convex sets in $\mathbb{R}^{n+m-1}$, and suppose the families $F_1, \dots, F_n$ satisfy the colorful intersection property. Then one of the families $F_i$ admits a $k_i$-transversal.
\end{theorem}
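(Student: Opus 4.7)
My plan is a contradiction argument via the Borsuk--Ulam theorem applied to the sphere $S^{d}$ with $d=n+m-1$. The geometric backbone is the identification
\[
 S^{d}\;\cong\;S^{k_1}*\cdots *S^{k_n},
\]
valid because the join on the right has dimension $k_1+\cdots+k_n+(n-1)=m+n-1=d$ and inherits an antipodal $\mathbb{Z}_2$-action from negation in each factor.

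The first step is an algebraic reformulation of the transversal condition: a family $\{C_0,\ldots,C_{k+1}\}$ of $k+2$ convex sets in $\mathbb{R}^{d}$ admits a $k$-transversal if and only if one can choose $c_j\in C_j$ that are affinely dependent, equivalently iff the signed Minkowski sum $\sum_{j}t_j C_j$ contains the origin for some nonzero $t=(t_0,\ldots,t_{k+1})$ with $\sum_j t_j=0$. Assuming no $F_i$ has a $k_i$-transversal, the set $K_i(t):=\sum_j t_j C^i_j$ therefore avoids the origin for every $t$ in the reduced sphere $S^{k_i}=\{t\in\mathbb{R}^{k_i+2}:\sum_j t_j=0,\ \sum_j t_j^2=1\}$. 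Selecting the nearest point to the origin inside $K_i(t)$ gives a continuous antipodally odd map $\sigma_i:S^{k_i}\to\mathbb{R}^{d}\setminus\{0\}$.

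I would then assemble these factors across the join by defining
\[
\Phi(\tau_1 s_1\oplus\cdots\oplus \tau_n s_n)\;=\;\sum_{i=1}^{n}\tau_i\,\sigma_i(s_i),
\]
a continuous odd map $S^{d}\to\mathbb{R}^{d}$ (oddness follows from $\sigma_i(-s_i)=-\sigma_i(s_i)$ and the fact that the antipodal action on the join simply negates each $s_i$). By Borsuk--Ulam, $\Phi$ has a zero $p^{\ast}$, giving a nontrivial relation $\sum_i \tau_i^{\ast}\,\sigma_i(s_i^{\ast})=0$ among nonzero vectors $\sigma_i(s_i^{\ast})\in K_i(s_i^{\ast})$.

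The main obstacle, and the heart of the argument, is to convert this identity into a contradiction with the colorful intersection hypothesis. The natural approach is to refine the definition of $\sigma_i$ by building it explicitly out of colorful intersection points: using the hypothesis to pick $y_J\in\bigcap_i C^i_{J_i}$ for every colorful choice $J=(j_1,\ldots,j_n)$, one can set $\sigma_i(s_i)=\sum_j t_{i,j}\,y_{J^{\ast}_{i\to j}}$ for a fixed colorful base $J^{\ast}$ (with $J^{\ast}_{i\to j}$ denoting $J^{\ast}$ with its $i$-th coordinate replaced by $j$). This keeps $\sigma_i(s_i)$ in $K_i(s_i)\setminus\{0\}$ and odd. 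The Borsuk--Ulam identity then becomes an explicit affine dependence among a controlled set of colorful intersection points in $\mathbb{R}^{d}$, and the delicate step is to exploit the built-in row-sum-zero conditions and the dimension equality $d=n+m-1$ to force this dependence to concentrate on a single family's contribution, producing the forbidden $k_i$-transversal.
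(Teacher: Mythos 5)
Your topological frame is essentially the one the paper uses: assume no $F_i$ has a $k_i$-transversal, build an odd map from $S^{d}\cong S^{k_1}*\cdots*S^{k_n}$ to $\mathbb{R}^{d}$ with one sphere factor per family, and invoke Borsuk--Ulam. Your sphere $S^{k_i}$ of Radon coefficients $\{t:\sum_j t_j=0,\ \|t\|=1\}$ is a legitimate substitute for the paper's combinatorial model (the order complex of proper subfamilies of $F_i$), and your reformulation of the $k$-transversal condition via affine dependence is correct and equivalent to Lemma \ref{separation}. The problem is that everything up to and including ``Borsuk--Ulam gives $\sum_i\tau_i^*\sigma_i(s_i^*)=0$'' is the easy part; the entire content of the theorem lies in the step you defer to the last sentence, and the mechanism you propose there does not work as stated.

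Concretely: with the star-based choice $\sigma_i(t)=\sum_j t_{i,j}\,y_{J^*_{i\to j}}$, the zero of $\Phi$ is an affine dependence among roughly $d+2$ points of $\mathbb{R}^d$, and $d+2$ points are \emph{always} affinely dependent, so the mere existence of the relation carries no information; you must show that the particular sign pattern produced by the zero yields a Radon-type partition of a single family $F_i$, i.e.\ a partition $F_i=A\cup B$ with $\operatorname{conv}\big(\bigcup A\big)\cap\operatorname{conv}\big(\bigcup B\big)\neq\emptyset$. Already for $n=2$, $k_1=1$, $k_2=0$, $d=2$, the four points $y_{(1,1)},y_{(2,1)},y_{(3,1)},y_{(1,2)}$ admit Radon partitions (for instance $\{y_{(1,2)},y_{(2,1)}\}$ versus $\{y_{(1,1)},y_{(3,1)}\}$) that split \emph{neither} family into two disjoint index sets, because $y_{(1,1)}$ and $y_{(1,2)}$ both lie in the same member of $F_1$ while three of the four points lie in the same member of $F_2$; such a partition certifies no transversal for either family, and your sketch gives no reason why the Borsuk--Ulam zero avoids these bad sign patterns. (Note also that the dependence can never literally ``concentrate on a single family's contribution'': a zero with a single nonvanishing $\tau_i$ forces $\sigma_i(s_i)=0$, contradicting your own construction, so concentration is exactly what cannot happen.) The paper sidesteps all of this by not choosing witness points in the sets at all: Lemma \ref{separation} converts ``no $k_i$-transversal'' into a strictly separating hyperplane for every complementary pair of subfamilies of $F_i$, each vertex is sent to the outward unit normal of its halfspace, and the colorful intersection property applied to the bottoms and tops of the $n$ chains of a simplex produces two points lying strictly on opposite sides of all the relevant hyperplanes, whence Lemma \ref{acyclic} shows the convex hull of the normals misses the origin and Borsuk--Ulam is genuinely contradicted. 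Some device of this kind---separating hyperplanes rather than selected points---appears to be what your argument is missing.
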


\begin{remark*} Let us point out some particular cases of Theorem \ref{main}.
\begin{enumerate}[\em (a)]
    \item The case $n=1$ is tautological, as this corresponds to a single family $F_1$ of $k+2$ convex sets in $\mathbb{R}^k$.
    \item The case $n=2$ with $k_1 = d-1$ and $k_2 = 0$ follows from the separation theorem for convex sets in $\mathbb{R}^d$. If the two members of $F_2$ are disjoint (i.e. they do not have a 0-transversal), then there is a hyperplane that strictly separates them. By the colorful intersection property every member of $F_1$ must cross this hyperplane. 
    \item The case $n\geq 2$ and $k_1 = \cdots  = k_n = 0$  corresponds to $n$ families, each consisting of two convex sets in $\mathbb{R}^{n-1}$. The colorful Helly theorem implies that the members of one of the $F_i$ must intersect.
    \item In the case $n=2$ we have $k_1+2$ red convex sets and $k_2+2$ blue convex sets in $\mathbb{R}^{k_1+k_2+1}$ such that every red set intersects every blue set. The conclusion is that the red sets admit a $k_1$-transversal or the blue sets admit a $k_2$-transversal. This is a special case of a result of Montejano and Karasev \cite[Corollary 7 with $k=1$]{roman}. (See also \cite[Theorem 3.2]{luis}.)
    \item For $n>2$ Theorem \ref{main} strengthens a result of Montejano and Karasev \cite[Theorem 8]{roman}. Their result states that if $F_1, \dots, F_n$ are families of $k+2$ convex sets in $\mathbb{R}^{n+k}$ which satisfy the colorful intersection property, then one of the families has a $k$-transversal. (See also Theorems 3.3 and 3.4 in \cite{luis}.) Our Theorem \ref{main} shows that dimension of the ambient space can be increased to $n(k+1)-1$ without affecting the conclusion.
\end{enumerate}
\end{remark*}

\section{Optimality of dimension in Theorem \ref{main}}

Before getting to the proof of Theorem \ref{main} we give an example which shows that the dimension $n+m-1$ can not be increased to $n+m$. 
Let $X$ be a set of $2n+m$ points in $\mathbb{R}^{n+m}$ and consider a partition $X = X_1 \cup \dots \cup X_n$ where $|X_i| = k_i+2$.
We assume that $X$ is in general position in the sense that any subset of size $n+m$ spans a unique affine hyperplane and the affine spans of the $X_i$ form a generic collection of flats. 
Let $\pi_i$ denote the orthogonal projection map from $\mathbb{R}^{n+m}$ to the affine span of $X_i$, and let \[F_i = \{\pi^{-1}(x) : x\in X_i\}.\] 
Since each member of $F_i$ is a flat of dimension $n+m-(k_i+1)$, if we pick one member from each $F_i$, then they will generically intersect in a unique point. In other words, the families $F_1, \dots, F_n$ satisfy the colorful intersection property. 
Moreover, $F_i$ does not admit a $k_i$-transversal. To see this, suppose $\gamma$ is a $k_i$-dimensional flat that intersects every member of $F_i$. Then $X_i$ must be contained in $\pi_i(\gamma)$, contradicting the affine independence of $X_i$.

\section{Two geometric lemmas}

The proof of Theorem \ref{main} requires two simple geometric lemmas. They are both quite standard (and most likely known in some form or another), but for completeness we include proofs.

\smallskip

The first lemma  can be thought of as an extension of Radon's lemma. It was previously observed by Goodman and Pollack \cite{eli}, and also plays a role in the argument by Strausz \cite{dino}.

\begin{lemma}\label{separation}
Let $F = \{C_1, \dots, C_{k+2}\}$ be a family of convex sets in $\mathbb{R}^d$. Then $F$ has a $k$-dimensional transversal if and only if there exists a partition $[k+2] = A \cup B$ such that 
\[\text{\em conv}\big( {\textstyle\bigcup\limits_{i\in A}} C_i\big) \cap \text{\em conv} (
{\textstyle\bigcup\limits_{j\in B}} C_j) \neq \emptyset \]
\end{lemma}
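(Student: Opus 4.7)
The plan is to apply a Radon-type argument in both directions, exploiting the elementary fact that any $k+2$ points sitting in a $k$-dimensional affine flat must admit a non-trivial affine dependence.

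For the forward direction, suppose $F$ admits a $k$-transversal $L$. I would pick a point $p_i \in L \cap C_i$ for each $i \in [k+2]$. Since $\dim L = k$, the points $p_1,\dots,p_{k+2}$ admit a non-trivial affine dependence $\sum_i c_i p_i = 0$ with $\sum_i c_i = 0$. Setting $A = \{i : c_i > 0\}$ and $B = [k+2]\setminus A$, both blocks are non-empty (the $c_i$ sum to zero but are not all zero), and the rewritten identity $\sum_{i\in A} c_i p_i = \sum_{j\in B}(-c_j)p_j$, normalized by $S \defeq \sum_{i\in A} c_i > 0$, produces a point lying in both $\text{conv}\{p_i : i \in A\}$ and $\text{conv}\{p_j : j \in B\}$, hence in both of the larger convex hulls appearing in the statement.

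For the backward direction, I would take a common point $p \in \text{conv}(\bigcup_{i\in A} C_i) \cap \text{conv}(\bigcup_{j\in B} C_j)$ and express it in two ways, $p = \sum_{i\in A} \lambda_i a_i = \sum_{j\in B} \mu_j b_j$, with $a_i \in C_i$, $b_j \in C_j$, non-negative weights, and $\sum \lambda_i = \sum \mu_j = 1$ (choosing an arbitrary point from $C_i$ or $C_j$ whenever its weight vanishes, so that every index of $[k+2]$ contributes a chosen point). Subtracting the two expansions yields an affine dependence among the $k+2$ selected points whose coefficients $(\lambda_i)_{i\in A}$ and $(-\mu_j)_{j\in B}$ sum to $1-1 = 0$ and are not all zero. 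Hence those $k+2$ points span an affine flat of dimension at most $k$, which can be enlarged to a $k$-flat meeting every $C_i$.

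Neither direction should present a genuine obstacle; the only mild care required is to ensure that both sides of the Radon partition are non-empty and that the affine dependence one constructs is non-trivial. The lemma really just says that a transversal and a Radon-style partition encode the same piece of linear-algebraic data, exchanged via one selection of representatives per set.
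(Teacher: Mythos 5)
Your proposal is correct and follows essentially the same route as the paper: choose representatives $p_i\in\gamma\cap C_i$ and apply Radon's lemma in the forward direction, and in the reverse direction select representatives witnessing the common point $p$ and note that the resulting affine dependence forces their affine span to have dimension at most $k$. The only difference is that you unpack the proof of Radon's lemma explicitly rather than citing it, which changes nothing of substance.
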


\begin{proof}
Suppose there is a $k$-dimensional transversal $\gamma$ to $F$. For each member $C_i\in F$ choose a point $p_i \in \Gamma \cap C_i$. Applying Radon's lemma to the set $\{p_1, \dots, p_{k+2}\}$ contained in the $k$-dimensional affine subspace $\gamma$ gives us the desired partition. 

For the opposite direction suppose there is a partition $[k+2] = A\cup B$ and a point $p\in \text{conv}\big( {\textstyle\bigcup_{i\in A}} C_i\big) \cap \text{conv} (
{\textstyle\bigcup_{j\in B}} C_j)$. For each $i\in A$ we can choose a point $p_i\in C_i$ such that $p\in \text{conv } \{p_i\}_{i\in A}$. Similarly, we can choose points $p_j\in C_j$ such that $p\in \text{conv } \{p_j\}_{j\in B}$.  Clearly the affine span of $\{p_i\}_{i\in A\cup B}$ intersects every member of $F$ and has dimension at most $|A|+|B|-2 = k$.
\end{proof}

The second lemma is a simple application of the separation theorem for convex sets.

\begin{lemma}\label{acyclic}
Let $v_1, \dots, v_n \in \mathbb{R}^d$ and $(b_1, \dots, b_n)\in \mathbb{R}^n$. Suppose there exists vectors $u, w\in \mathbb{R}^d$ such that 
\[w \cdot v_i < b_i < u \cdot v_i \]
for every $1\leq i \leq n$. Then there exists a vector $v\in \mathbb{R}^d$ such that $v\cdot v_i > 0$ for every $1\leq i\leq n$.
\end{lemma}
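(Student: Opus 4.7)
The plan is quite direct since the statement almost writes its own proof. First I would simply take $v = u - w$. The hypothesis $w \cdot v_i < b_i < u \cdot v_i$ yields $w \cdot v_i < u \cdot v_i$ for every $i$, hence $(u-w) \cdot v_i > 0$ for every $i$, and the conclusion holds. So the lemma is essentially tautological as stated.

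However, the author advertises the lemma as ``a simple application of the separation theorem,'' which suggests a preference for a Gordan-style argument that does not depend on this explicit choice (and presumably fits better with how the lemma is used later in the proof of Theorem \ref{main}). Such a proof proceeds by contradiction: assume no vector $v$ satisfies $v \cdot v_i > 0$ for every $i$. Then by Gordan's theorem of the alternative, which is a standard consequence of the hyperplane separation theorem applied to the convex hull of $\{v_1,\dots,v_n\}$, there exist non-negative scalars $\lambda_1, \dots, \lambda_n$, not all zero, with $\sum_{i=1}^n \lambda_i v_i = 0$. Taking the inner product of this identity with $w$ and with $u$ yields
\[ 0 \;=\; \sum_{i=1}^n \lambda_i (w \cdot v_i) \;<\; \sum_{i=1}^n \lambda_i b_i \;<\; \sum_{i=1}^n \lambda_i (u \cdot v_i) \;=\; 0, \]
where the two middle strict inequalities are justified because each $\lambda_i \geq 0$ and at least one of them is positive, while the outer equalities follow from $\sum_i \lambda_i v_i = 0$. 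This $0 < 0$ is the desired contradiction.

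There is no genuine obstacle here: the separation-theorem invocation is routine, and the direct one-line proof $v = u - w$ already settles the statement outright. In writing it up I would present the separation version (to match the author's framing and to foreshadow the style of argument the proof of Theorem \ref{main} will need), and simply remark that the shortcut $v = u - w$ verifies the conclusion immediately.
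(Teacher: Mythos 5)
Your second argument is exactly the paper's proof: it assumes no such $v$ exists, places the origin in the convex hull of the $v_i$ to get a nonnegative nontrivial dependency $\sum \lambda_i v_i = 0$, and derives $0<0$ by pairing with $w$ and $u$, so the proposal is correct and matches the paper. Your observation that $v = u - w$ works outright is also valid (the $b_i$ only serve to chain the two inequalities against a common threshold), and it would in fact suffice for the application in the proof of Theorem \ref{main} as well.
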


\begin{proof}
If the conclusion of the Lemma does not hold then the origin is contained in the convex hull of the $v_i$, so there 
exists a linear dependency $\sum_{i\in I}t_iv_i = 0$ with $t_i>0$ for some $\emptyset \neq I\subset [n]$. This is a contradiction since
\[0 =
w\cdot \big( \textstyle{\sum\limits_{i\in I}}t_iv_i \big) = 
\textstyle{\sum\limits_{i\in I}} t_i(w\cdot v_i) <
\textstyle{\sum\limits_{i\in I}} t_ib_i < \textstyle{\sum\limits_{i\in I}} t_i(u\cdot v_i) = u\cdot \big( \textstyle{\sum\limits_{i\in I}}t_iv_i \big) = 0. \qedhere\]
\end{proof}

\section{Proof of Theorem \ref{main}}

%\begin{proof}[Proof of Theorem \ref{main}]
Set $d = n+m-1$, and suppose that none of the subfamilies $F_i$ admits a $k_i$-transversal. 
Define $K_i$  to be the abstract
simplicial complex whose vertex set $V(K_i)$ consists of the nonempty proper subfamilies of $F_i$ and whose faces are the chains ordered by inclusion. In other words, \[K_i \defeq \{\sigma_1\subset \sigma_2 \subset \cdots \subset \sigma_r : \emptyset \neq \sigma_j \subsetneq F_i \}.\]

Note that $K_i$ is isomorphic to the barycentric subdivision of the $k_i$-skeleton of the $(k_i+1)$-dimensional simplex, and is therefore homeomorphic to  $S^{k_i}$. 
Furthermore, we observe that $\upsilon_i(\sigma) \defeq F_i\setminus \sigma$ defines a free simplicial involution on $K_i$ since taking complements reverses inclusions
\[ \sigma_1 \subset \sigma_2 \subset \cdots \subset \sigma_r  \xmapsto{\: \; \upsilon_i \; \: } \upsilon_i({\sigma}_r) \subset \cdots \subset \upsilon_i({\sigma}_2) \subset \upsilon({\sigma}_1). \]

If we group the vertices of $K_i$ into complementary pairs $\{\sigma, \upsilon_i({\sigma})\}$, then Lemma \ref{separation} implies that
for each complementary pair there exists an affine hyperplane $h_{\{\sigma, \upsilon_i({\sigma})\}}$ in $\mathbb{R}^{d}$ which strictly separates the members of $\sigma$ from the members of $\upsilon_i({\sigma})$. Let $H_{\sigma}$ and $H_{\upsilon_i(\sigma)}$ denote the opposite open halfspaces bounded by $h_{\{\sigma, \upsilon_i({\sigma})\}}$ which contain the members of $\sigma$ and $\upsilon_i(\sigma)$, respectively, and let $f_i(\sigma)$ and $f_i(\upsilon_i(\sigma))$ be their outward unit normal vectors.  
In this way, we obtain a mapping 
\[ f_i :  V(K_i) \to S^{d-1},\]
which satisfies
$f_i(\sigma) = - f_i(\upsilon_i(\sigma)) $.

\smallskip

Let $K$ be the join $K = K_1 * K_2 * \cdots * K_n$ which comes equipped with the free simplicial involution $\upsilon = \upsilon_1 * \upsilon_2 * \cdots * \upsilon_n$. Note that $K$ is homeomorphic to $S^{d}$. We define a map $f : V(K) \to S^{d-1}$ given by
\[f(\sigma) = f_i(\sigma) \iff \sigma \in V(K_i),\] which obviously  satisfies $f(\sigma)  = -f(\upsilon(\sigma))$. By taking the affine extension of $f$ we get a continuous equivariant map 
$\hat{f} : K \to \mathbb{R}^d$ for which the following holds. 
\begin{claim*}
For any simplex $S\in K$, the origin is not contained in $\hat{f}(S)$
\end{claim*}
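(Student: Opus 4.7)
The plan is to reduce the claim to Lemma~\ref{acyclic}. Writing each separating hyperplane as $\{x : f(\sigma)\cdot x = c_\sigma\}$ with members of $\sigma$ lying on the side $f(\sigma)\cdot x < c_\sigma$ (the meaning of ``outward unit normal''), it suffices to produce two points $p,q \in \mathbb{R}^d$ such that $f(\sigma)\cdot p < c_\sigma < f(\sigma)\cdot q$ for every vertex $\sigma$ of $S$. Lemma~\ref{acyclic}, applied with $w = p$, $u = q$, and $b_\sigma = c_\sigma$, would then yield a vector $v$ with $v\cdot f(\sigma)>0$ for all $\sigma \in V(S)$, placing the convex hull $\hat f(S)$ in an open halfspace that misses the origin.

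To build $p$ and $q$, I would unpack the simplex structure of the join. A simplex $S$ of $K = K_1 * \cdots * K_n$ is a join of simplices, one from each $K_i$, and a simplex of $K_i$ corresponds to a chain $\sigma^{(i)}_1 \subsetneq \cdots \subsetneq \sigma^{(i)}_{r_i}$ of nonempty proper subfamilies of $F_i$ (with $r_i = 0$ permitted, meaning $S$ has no vertex in $K_i$). For each $i$ with $r_i \geq 1$, pick a set $C_i \in \sigma^{(i)}_1$ and a set $C_i' \in F_i \setminus \sigma^{(i)}_{r_i}$; both are nonempty choices precisely because vertices of $K_i$ are nonempty proper subsets of $F_i$. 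For the remaining $i$'s let $C_i, C_i'$ be arbitrary members of $F_i$. The colorful intersection property then supplies points $p \in C_1\cap\cdots\cap C_n$ and $q \in C_1'\cap\cdots\cap C_n'$.

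The required inequalities now follow at once from the chain nesting: for every vertex $\sigma^{(i)}_j$ of $S$ we have $C_i \in \sigma^{(i)}_1 \subseteq \sigma^{(i)}_j$, so $p \in C_i \subset H_{\sigma^{(i)}_j}$ and hence $f(\sigma^{(i)}_j)\cdot p < c_{\sigma^{(i)}_j}$; symmetrically $C_i' \notin \sigma^{(i)}_{r_i} \supseteq \sigma^{(i)}_j$, so $q \in C_i' \subset H_{\upsilon_i(\sigma^{(i)}_j)}$ and $f(\sigma^{(i)}_j)\cdot q > c_{\sigma^{(i)}_j}$. The one point that requires attention is exactly this choice of witnesses — a member of the smallest link and a non-member of the largest — so that the single point $p$ (respectively $q$) uniformly lands on the correct side of every hyperplane attached to the chain in $K_i$. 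Once that is arranged, Lemma~\ref{acyclic} closes the argument without invoking any further topology.
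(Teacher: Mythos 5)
Your proof is correct and follows essentially the same route as the paper: choose a member of the smallest set in each chain and a member of the complement of the largest, use the colorful intersection property to get the two witness points, and invoke Lemma~\ref{acyclic} to push $\hat f(S)$ into an open halfspace missing the origin. The only cosmetic difference is that you treat arbitrary simplices directly (allowing empty chains in some $K_i$), whereas the paper argues for maximal simplices, from which the general case follows since $\hat f$ of a face is contained in $\hat f$ of the maximal simplex containing it.
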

Assuming this claim is true, the proof the theorem is complete since $\hat{f}$ would contradict the Borsuk--Ulam theorem.  

\smallskip

It remains to prove the claim. Consider a maximal simplex $S\in K$.  
The vertices of $S$ can be expressed as $n$ chains 
\[\arraycolsep=1.5pt\def\arraystretch{1.7}
\begin{array}{ccccccc}
     \sigma^{(1)}_1 & \subset & \sigma^{(1)}_2 & \subset & \cdots & \subset & \sigma^{(1)}_{k_1+1} \\
     \sigma^{(2)}_1 & \subset & \sigma^{(2)}_2 & \subset & \cdots & \subset & \sigma^{(2)}_{k_2+1} \\
     &&& \vdots &&&\\
     \sigma^{(n)}_1 & \subset & \sigma^{(n)}_2 & \subset & \cdots & \subset & \sigma^{(n)}_{k_n+1}
\end{array}\]
where $\sigma^{(i)}_j$ is a subfamily of $F_i$ with $|\sigma^{(i)}_j| = j$. For every $1\leq i \leq n$ let us denote $\sigma^{(i)}_1 = \{C_i\}$ and $\nu_i(\sigma^{(i)}_{k_i+1}) = \{D_i\}$. We observe that \[C_i \subset {\textstyle \bigcap\limits_{j=1}^{k_i+1}} H_{\sigma^{(i)}_j}
\;\;\; \text{ and } \;\;\;  
D_i \subset {\textstyle \bigcap\limits_{j=1}^{k_i+1}} H_{\upsilon_i(\sigma^{(i)}_j)}\]
so by the colorful intersection property, it follows that the intersections $\bigcap_{i=1}^n C_i$ and $\bigcap_{i=1}^n D_i$ are both nonempty. This implies 

\[{\textstyle \bigcap\limits_{\sigma\in S}} H_{\sigma} \neq \emptyset 
\;\;\; \text{and} \;\;\; 
{\textstyle \bigcap\limits_{\sigma\in S}} H_{\upsilon(\sigma)} \neq \emptyset.
\]

Since $H_{\sigma}$ and $H_{\upsilon(\sigma)}$ are opposite open halfspaces bounded by $h_{\{\sigma,\upsilon(\sigma)\}}$, it follows from Lemma \ref{acyclic} that  $0\notin \text{conv}\hspace{0.03cm} f(S) = \hat{f}(S)$. This completes the proof of the claim and of Theorem \ref{main}.
%\end{proof}

\section{Final remarks}

The paper by Montejano and Karasev \cite{roman} contains several other interesting results that are not covered by our Theorem \ref{main}. In particular, Corollary 7 in \cite{roman} gives conditions for when families of size greater than $k+2$ have a $k$-transversal. For example they show the following:

\medskip

{\em Let $F_1, F_2, F_3$ be families of convex sets in $\mathbb{R}^5$ satisfying the colorful intersection property, and suppose $|F_1| = |F_2| = |F_3| =  5$. Then one of the families $F_i$ admits a 2-transversal.}

\medskip

We have not been able to prove the statement above using the proof method of Theorem \ref{main}. The main issue seems to be that for families of size greater than $k+2$ we do not have a good combinatorial characterization for the existence of a $k$-transversal as in Lemma \ref{separation}. 

\smallskip

{\em Question.} What is the optimal dimension in Corollary 7 in \cite{roman} ?

\end{document}